\DeclareFontFamily{U}{mathb}{\hyphenchar\font45}
\DeclareFontShape{U}{mathb}{m}{n}{
      <5> <6> <7> <8> <9> <10> gen * mathb
      <10.95> mathb10 <12> <14.4> <17.28> <20.74> <24.88> mathb12
      }{}
\DeclareSymbolFont{mathb}{U}{mathb}{m}{n}
\DeclareMathSymbol{\righttoleftarrow}{3}{mathb}{"FD}
\DeclareFontFamily{U}{wncy}{}
\DeclareFontShape{U}{wncy}{m}{n}{<->wncyr10}{}
\DeclareSymbolFont{mcy}{U}{wncy}{m}{n}
\DeclareMathSymbol{\Sh}{\mathord}{mcy}{"58}
\theoremstyle{plain}
\newtheorem{prop}{Proposition}[section]
\newtheorem{theo}[prop]{Theorem}
\newtheorem{coro}[prop]{Corollary}
\newtheorem{lemm}[prop]{Lemma}
\theoremstyle{remark}
\newtheorem{rema}[prop]{Remark}
\theoremstyle{definition}
\newtheorem{exam}[prop]{Example}
\numberwithin{equation}{section}
\newcommand{\bP}{{\mathbb P}}
\newcommand{\Q}{{\mathbb Q}}
\newcommand{\G}{{\mathbb G}}
\newcommand{\Z}{{\mathbb Z}}
\newcommand{\rH}{{\mathrm H}}
\newcommand{\GL}{{\mathrm{GL}}}
\newcommand{\ra}{\rightarrow}
\newcommand{\lra}{\longrightarrow}
\newcommand{\bQ}{{\mathbb Q}}
\newcommand{\bZ}{{\mathbb Z}}
\newcommand{\eqto}{\stackrel{\lower1.5pt\hbox{$\scriptstyle\sim\,$}}\to}
\newcommand{\eqdashto}{\stackrel{\lower1.5pt\hbox{$\scriptstyle\sim\,$}}\dashrightarrow}
\newcommand{\actsfromleft}{\mathrel{\reflectbox{$\righttoleftarrow$}}}
\newcommand{\actsfromright}{\righttoleftarrow}
\DeclareMathOperator{\Gal}{Gal}
\DeclareMathOperator{\Pic}{Pic}
\DeclareMathOperator{\Spec}{Spec}
\DeclareMathOperator{\Hom}{Hom}
\DeclareMathOperator{\Br}{Br}
\DeclareMathOperator{\Aut}{Aut}
\DeclareMathOperator{\Burn}{Burn}
\begin{document}
\title[Plane Cremona group]{Cohomology of finite subgroups of the plane Cremona group}

\author{Andrew Kresch}
\address{
  Institut f\"ur Mathematik,
  Universit\"at Z\"urich,
  Winterthurerstrasse 190,
  CH-8057 Z\"urich, Switzerland
}
\email{andrew.kresch@math.uzh.ch}
\author{Yuri Tschinkel}
\address{
  Courant Institute,
  251 Mercer Street,
  New York, NY 10012, USA
}

\email{tschinkel@cims.nyu.edu}

\address{Simons Foundation\\
160 Fifth Avenue\\
New York, NY 10010\\
USA}

\date{March 3, 2022}

\begin{abstract}
An equivariant stable birational invariant of an action of a finite group on a smooth projective 
variety is the first 
cohomology group of the Picard module.
Bogomolov--Prokhorov and Shinder computed this for actions of cyclic groups on rational surfaces, with maximal stabilizers, 
in terms of the geometry of the fixed point locus. 
Using the Brauer group of the quotient stack, we extend the computation to more general actions and  
relate the computation to the equivariant Burnside group formalism.
\end{abstract}

\maketitle

\section{Introduction}
\label{sec.intro}
Consider a smooth projective variety $X$ over a field $k$ of characteristic zero, with a fixed algebraic closure $\bar{k}/k$. Let $G$ be a profinite group. Following Manin~\cite{manin2}, we say that $X$ is a $G$-variety if we have an action of $G$ on $X_{\bar{k}}$. Manin distinguished two cases:
\begin{itemize}
\item {\em Algebraic:} $G=\Gal(\bar{k}/k)$ and the action is via
the action on $\bar{k}$,
\item {\em Geometric:} $k=\bar{k}$ and $G$ is finite, 
%$|G|<\infty$, 
acting by $G\hookrightarrow \Aut(X)$.
\end{itemize}
In the algebraic case, we are interested in (stable) \emph{$k$-rationality} of $X$.
In the geometric case, we are interested in (stable) \emph{linearizability}, where linearizability means the $G$-equivariant birationality of $X$ and $\bP(V)$
for some representation $G\to \GL(V)$,
and stable linearlizability means the same for $X\times \bP^m$, for some $m$ (where $G$ acts trivially on $\bP^m$).

One of the insights in \cite{manin} and \cite{manin2} was that there are striking similarities between the study of $k$-rationality problems for geometrically rational surfaces $X$ over
nonclosed fields $k$ and the study of $G$-surfaces, up to $G$-equivariant birationality, over algebraically closed fields.  

Of fundamental importance, in both cases, is the cohomology group
\begin{equation}
\label{eqn:pic}
\rH^1(H, \Pic(X_{\bar{k}})),
\end{equation}
where $H\subseteq G$ is a finite-index subgroup. Its vanishing, for all $H$, is a necessary condition for stable $k$-rationality, in the algebraic case, respectively, stable linearizability, in the geometric case. In principle, \eqref{eqn:pic} is computable, provided one can reconstruct the $H$-action on $\Pic(X_{\bar{k}})$. In practice, in the algebraic case, this amounts to the determination of the Galois action on exceptional curves, or the Hasse-Weil $L$-function -- both computationally intensive problems. 
This has been implemented for del Pezzo surfaces of degree 4 \cite{bright}, and diagonal del Pezzo surfaces of degree $\le 3$ in \cite{ct-kanevsky}, \cite{KT-dp2}, \cite{tony}. 

On the other hand, 
Manin suggested (see, e.g., \cite[Thm.\ 5.9]{manin2}) to use information about fixed-point loci for the action of $G$ -- in the geometric case, this is easily available from the equations of the surface.   

From now on, we focus on the geometric case; we will assume that $k$ is algebraically closed and $X$ is a rational surface. The starting point for our paper was a theorem of Bogomolov and Prokhorov \cite{BP}: a 
cyclic group $G=C_p$ of prime order $p$, acting regularly on $X$, can have at most one irreducible (smooth) curve $C$ of genus $g\ge 1$ in its fixed locus, and the cohomology is given by 
$$
\rH^1(G, \Pic(X)) \cong (\bZ/p\bZ)^{2\mathsf{g}}, 
$$
where $\mathsf{g}=\mathsf{g}(C)$ is the genus of $C$. This was extended by Shinder \cite{shinder} to arbitrary cyclic actions, under the assumption that all stabilizers are maximal. 

Our main result is an algorithm that allows a direct computation of $\rH^1(G,\Pic(X))$ in the presence of fixed points, in particular, for cyclic $G$,  
in terms of curves with nontrivial generic stabilizer. This is very close, in spirit, to the Burnside group formalism from \cite{BnG}, providing new invariants in equivariant birational geometry. 

We approach the computation of $\rH^1(G,\Pic(X))$ 
via the Brauer group $\Br([X/G])$ of the {\em quotient stack} $[X/G]$, or
more classically, the {\em equivariant} Brauer group, introduced in \cite{FW}, see also \cite[Sect.\ 2.3]{HT-quad}.

In Section \ref{sect:background}, we recall basic notions concerning group cohomology and the plane Cremona group.
In Section~\ref{sect:stable}, we discuss stable birational invariants of $G$-actions, related via the exact sequence 
$$
\Pic(X)^G\stackrel{\delta_2}{\lra}\rH^2(G,k^\times) \to \Br([X/G]) \to \rH^1(G,\Pic(X)) \stackrel{\delta_3}{\lra} \rH^3(G, k^\times).
$$
This sequence 
determines $\rH^1(G,\Pic(X))$ in the presence of fixed points, by the triviality of $\delta_2$ and $\delta_3$, in this case.
We compute  $\Br([X/G])$ in Section~\ref{sect:brauer}, from information on curves with nontrivial generic stabilizer. We include representative examples of such computations in Section~\ref{sect:exam} and comment on connections with equivariant Burnside groups in Section~\ref{scn.comparison}.

\medskip
\noindent
{\bf Acknowledgments:} 
We are grateful to Yuri Prokhorov for his interest and helpful comments. The first author was partially supported by the Swiss National Science Foundation. The second author was partially supported by NSF grant 2000099.

\section{Generalities}
\label{sect:background}
We work over an algebraically closed field $k$ of characteristic zero.

\subsection{Cohomology of finite groups}
Throughout, $G$ is a finite group, and
$\rH^i(G,M)$
the group cohomology with coefficients in a $G$-module $M$. We write $M^G$ for the submodule of $G$-invariant elements in $M$. 

We are interested in $M=k^\times$, with trivial action. 
The $\Q$-module structure on the quotient $M/M_{\mathrm{tors}}$ by the torsion subgroup $M_{\mathrm{tors}}=\mu_{\infty}$ (roots of unity) gives rise to an identification 
$$
\rH^i(G,k^\times)\cong \rH^i(G,\mu_{\infty})\quad \text{for }  i>0,
$$
and in particular, the independence of $k$.
We record computations for several particular groups.
For a cyclic group $C_m=\Z/m\Z$, we have
\[ \rH^2(C_m,k^\times)=0,\quad
\rH^3(C_m,k^\times)\cong\Z/m. \]
For a bicyclic group 
$G:=\Z/m\Z\oplus \Z/n\Z$, with $d:=\gcd(m,n)$,
\[ \rH^2(G,k^\times)\cong\Z/d,
\quad \rH^3(G,k^\times)\cong\Z/m\oplus \Z/d\oplus \Z/n. \]
For a tricyclic group $G:=\Z/m_1\Z\oplus \Z/m_2\Z\oplus \Z/m_3\Z$, we put
\[ d_{ij}:=\gcd(m_i,m_j), \quad
d:=\gcd(m_1,m_2,m_3), \]
and then
\[
\rH^2(G,k^\times)\cong\bigoplus_{i<j}\Z/d_{ij},
\quad
\rH^3(G,k^\times)\cong\bigoplus_i \Z/m_i\oplus\bigoplus_{i<j}\Z/d_{ij}\oplus\Z/d.
\]
For the dihedral group $D_8$ of order $8$, we have
$$
\rH^2(D_8,k^\times) \cong \bZ/2, \quad 
\rH^3(D_8,k^\times) \cong (\bZ/2)^2\oplus \bZ/4.
$$

Recall that, for any (pro)finite $G$, we have
\begin{equation}
\label{eqn:h1-2}
\rH^1(G,\bQ/\bZ) \cong \rH^2(G,\bZ),
\end{equation}
coming from the exact sequence 
$$
0\ra \bZ\to \bQ\to \bQ/\bZ\to 0.
$$
We also have
\[
\rH^i(G,M) \hookrightarrow \bigoplus_{\ell} \, \rH^i(\mathrm{Syl}_{\ell}, M)^{N_G(\mathrm{Syl}_{\ell})},
\]
where $\mathrm{Syl}_{\ell}=\mathrm{Syl}_{\ell}(G)$ is an $\ell$-Sylow subgroup of $G$ and $N_G(\mathrm{Syl}_{\ell})$ is its normalizer. This allows to separate $\ell$-primary components of $\rH^i$. 

\subsection{The Cremona group and its finite subgroups}
\label{sect:cre}
The plane Cremona group $\mathrm{Cr}_2$ is the group of birational automorphisms of $\bP^2$. Each finite subgroup $G\subset \mathrm{Cr}_2$ can be realized as a subgroup of (regular) automorphisms of a smooth projective rational surface. The classification of groups that can occur, and the determination of the conjugacy classes of these groups in $\mathrm{Cr}_2$, has occupied generations of mathematicians, culminating in \cite{blanc-thesis}, for abelian $G$, and \cite{DI}, in general. 

The standard approach to these problems relies on the equivariant Minimal Model Program: It suffices to consider {\em minimal} $G$-surfaces, which in turn are $G$-isomorphic to either conic bundles over $\bP^1$, or del Pezzo surfaces. 
We have  $\Pic(X)^G\simeq \bZ^2$ in the first case, respectively, $\simeq \bZ$, in the second case. 
When the anticanonical degree is $\ge 5$, there is no cohomology.

In case of del Pezzo surfaces of degree $\le 4$, the group $G$ 
embeds into the Weyl group of the corresponding
root system on the (primitive) Picard group, i.e., $\mathsf D_5$, $\mathsf E_6$, $\mathsf E_7$, $\mathsf E_8$. 
Finite subgroups of these can be enumerated. Not all of these groups arise as automorphisms of rational surfaces, but for those that do,  equations of these surfaces are written down explicitly.  
In the conic bundle case, the analysis is based on a decomposition of the $G$-action into an action of a quotient group on the base, and an action of the kernel on the generic fiber; see, e.g., \cite{DI}, \cite{pro-2}.

\subsection{Abelian subgroups of the plane Cremona group}
\label{sect:ab}
A classification of finite abelian subgroups $G\subset \mathrm{Cr}_2$ can be found in \cite{blanc-thesis}. We summarize the main features. 

\begin{itemize}
\item The only $G$ that occur are
\cite[Thm.\ 6]{blanc-thesis}:
$$
(\bZ/2)^4,\, (\bZ/3)^2,\, (\bZ/4)^2 \oplus (\bZ/2),\, \bZ/2n \oplus (\bZ/2)^2,\,
\bZ/n\oplus \bZ/m,  \, \forall n,m.
$$
\item If $G$ fixes some curve of positive genus then $G$ is cyclic, of order $\le 6$. If the curve has genus $> 1$, the order is 2 or 3 \cite[Thm.\ 3]{blanc-thesis}.
\item If no (nontrivial) element of $G$ fixes a curve of positive genus, then $G$ is conjugate to
a subgroup of $\Aut(\bP^2)$, $\Aut(\bP^1\times \bP^1)$, or $G=\bZ/2\times \bZ/4$, with a particular action \cite[Thm.\ 5]{blanc-thesis}. 
\item If $G$ is cyclic and no element of $G$ fixes a curve of positive genus then $G$ is conjugate to a subgroup of 
$\Aut(\bP^2)$ \cite[Thm.\ 4]{blanc-thesis}.
\end{itemize}

Cyclic subgroups play a special role in the classification: they always admit fixed points. There are two series of such groups:
\begin{itemize}
\item[$(\mathbf{dJ})$] de Jonqui\`eres, i.e., preserving a pencil of rational curves,
\item[$(\mathbf{ndJ})$] not de Jonqui\`eres. 
\end{itemize}
Nonlinearizable groups of type $(\mathbf{dJ})$ have even order $2n$, with element of order $2$ fixing a hyperelliptic curve $C$ and residual action of $C_n$ on $C$. There are 29 families of conjugacy classes of $(\mathbf{ndJ})$, listed in \cite[Table 2]{blancsubgroups}.

\subsection{Cohomology of finite subgroups of the Cremona group}
\label{sect:bog}
In the del Pezzo case, all possible cohomology groups
$$
\rH^1(G,\Pic(X))
$$
have been computed, see, e.g., \cite{kst}, \cite{swd}, \cite[Thm. 4.1]{corn}, 
using the presentation of $G$ as a 
subgroup of the relevant Weyl group, as explained in Section~\ref{sect:cre}. However, not all subgroups of the respective Weyl groups are realizable as automorphisms. 
In the conic bundle case, this computation can be found essentially in \cite[Sect.\ 3A]{CTSSD}: the cohomology group is always $2$-torsion.

A theorem of Bogomolov and Prokhorov \cite{BP} states:
$$
\rH^1(G, \Pic(X)) \cong (\bZ/p\bZ)^{2\mathsf{g}}, 
$$
when $G=C_p$ is cyclic of prime order $p$ and 
$\mathsf{g}$ is the genus of the unique curve of positive genus fixed by $G$ (if such a curve exists). It applies to $(\mathbf{dJ})$ and $p=2$, and to five actions of type $(\mathbf{ndJ})$, with $p=2$, $3$, $5$. The proof relies on  classification.  

Shinder \cite{shinder} generalized this result to arbitrary finite cyclic $G=C_n$ acting on surfaces $X$ with 
$\rH_1(X,\bZ)=0$,
under the assumption that \emph{all stabilizers are maximal}. For rational $X$, this applies to two actions, not covered by \cite{BP}, namely,
$\# 5$ ($n=4$) and $\# 17$ ($n=6$) in \cite[Table 2]{blancsubgroups}. 
In Section~\ref{sect:brauer} we provide a general result describing cohomology in terms of fixed loci, with a treatment of arbitrary cyclic actions on rational surfaces in Section~\ref{sect:exam}. 

\section{Stable birational invariants of $G$-actions}
\label{sect:stable}
Let $X$ be a smooth projective variety and 
$G$ a finite group acting regularly on $X$.
It is known that 
$$
\rH^1(G,\Pic(X))
$$
is a stable $G$-birational invariant and that
$$
\rH^1(H, \Pic(X)) \neq 0, \quad H\subseteq G,
$$
is an obstruction to stable linearizability.
In this section we discuss several related invariants of $G$-actions.

Let 
$$
\Br(X):=\rH^2(X,\mathbb G_m), 
$$
be the Brauer group of $X$. We also consider the Brauer group 
of the quotient {\em stack}
$$
\Br([X/G])=\rH^2([X/G],\G_m).
$$
It is a stable $G$-birational invariant of $X$, as well as 
a stable birational invariant of the quotient stack $[X/G]$, in the sense of \cite[Sect.\ 4]{KT-orbi}.    

If $X$ is rational then
$$
\Br(X)=0. 
$$
%In this case, both $\Br([X/G])$ and $\rH^1(G,\Pic(X))$ are annihilated by %$|G|$. 
A spectral sequence for $G$-actions yields:
\begin{align}
\begin{split}
\label{eqn.BrXmodG}
0&\to  \Hom(G,k^\times)\to \Pic(X,G)\to 
\Pic(X)^G \stackrel{\delta_2}{\lra}  \rH^2(G,k^\times)\\
&\qquad\to \Br([X/G])\to \rH^1(G,\Pic(X))\stackrel{\delta_3}{\lra}  \rH^3(G,k^\times).
\end{split}
\end{align}
We record the simple observations:
\begin{itemize}
\item Both $\delta_2$ and $\delta_3$ are zero, provided $G$ has a fixed point on $X$, e.g., when $G$ is cyclic.  
\item If $G$ is cyclic, then $\rH^2(G,k^\times)=0$.
\end{itemize}

The {\em Amitsur group} 
$$
\mathrm{Am}(X,G)
$$
is defined as the image of $\delta_2$ \cite[Sect.\ 6]{blanc2018finite}. It is a stable $G$-birational invariant. In particular,  
$$
\mathrm{Am}(X, H)\neq 0, \quad  H\subseteq G,
$$
is an obstruction to stable linearizability \cite{sarikyan}.

Analogously, the image of $\delta_3$ is a stable $G$-birational invariant. This follows from the corresponding property for $\rH^1(G,\Pic(X))$ and the functoriality of $\delta_3$. In \cite[Sect.\ 6]{KT-eff} we explained in the algebraic case how to compute $\delta_3$ effectively; the adaptation to the geometric case is straightforward. 

In Section~\ref{sect:exam} we give examples of $G$-actions on rational surfaces with nontrivial
$$
\Br([X/G]), \quad \rH^1(G,\Pic(X)), \quad \delta_3.
$$

\section{Computing the Brauer group}
\label{sect:brauer}
In this section, we assume that $X$ is a smooth projective rational surface, with a generically free regular action of a finite group $G$.
Our goal is to describe the invariant $\Br([X/G])$.
%(see Section \ref{sect:stable}).
Our description makes use of:
\begin{itemize}
\item the injectivity of the restriction map $\Br([X/G])\to \Br([U/G])$,
for nonempty $G$-invariant open $U\subset X$ \cite[Prop.\ 2.5 (iv)]{ant}, hence also of restriction to the generic point
\begin{equation}
\label{eqn.togenericpoint}
\Br([X/G])\to \Br(k(X)^G);
\end{equation}
\item a description in terms of residues of the image of the map \eqref{eqn.togenericpoint} \cite[Prop.\ 2.2]{bssurf}.
\end{itemize}

The description in terms of residues is similar to the classical description of the Brauer group of a nonsingular projective variety as the subgroup of the Brauer group of the function field, having trivial residues along all divisors.
In the present setting, there are analogous residue maps, and
$\Br([X/G])$
is identified with 
\begin{equation}
\label{eqn.brauerkernel}
\ker\left(\Br(k(X)^G)\to  \bigoplus_{[\xi] \in X^{(1)}/G} \rH^1([\Spec(k(\xi))/D_{\xi}],\Q/\Z)\right).
\end{equation}
Here, $\xi$ denotes an orbit representative of codimension $1$ points on $X$, with decomposition group $$
D_\xi:=\{g\in G\,|\,\xi\cdot g=\xi\}.
$$
The decomposition group contains the inertia group $I_\xi$, consisting of the elements of $D_\xi$ which act trivially on $k(\xi)$.
The inertia group is cyclic, since
the normal bundle to the closure $\overline{\{\xi\}}$ at $\xi$ is a faithful one-dimensional representation, and central in $D_\xi$, since $k$ contains all roots of unity.

\begin{lemm}
\label{lem.itfactors}
The map in \eqref{eqn.brauerkernel} factors through
\[
\bigoplus_{[\xi] \in X^{(1)}/G} \rH^1(k(\xi)^{D_{\xi}},\Q/\Z).
\]
Fixing a Brauer class $\alpha\in \Br(k(X)^G)$ and an orbit representative $\xi$ of codimension $1$ points on $X$, the $\rH^1(k(\xi)^{D_{\xi}},\Q/\Z)$-component of the image of $\alpha$
is equal to $|I_\xi|$ times the image of $\alpha$ under the classical residue map
\[ \Br(k(X)^G)\to \rH^1(k(\xi)^{D_{\xi}},\Q/\Z). \]
\end{lemm}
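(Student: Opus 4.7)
The plan is to analyze the stacky residue by combining the gerbe structure of the residue stack with the classical ramification formula for Brauer residues in a tower of discrete valuation rings.

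Since $I_\xi$ is cyclic, central in $D_\xi$, and acts trivially on $k(\xi)$ while $D_\xi/I_\xi$ acts faithfully with $k(\xi)^{D_\xi}=k(\xi)^{D_\xi/I_\xi}$, the stack $[\Spec k(\xi)/D_\xi]$ is an $I_\xi$-banded gerbe over $\Spec k(\xi)^{D_\xi}$. The Hochschild--Serre sequence for this central extension yields
\[
0\to \rH^1(k(\xi)^{D_\xi},\Q/\Z)\to \rH^1([\Spec k(\xi)/D_\xi],\Q/\Z)\to \Hom(I_\xi,\Q/\Z),
\]
so the first claim of the lemma amounts to vanishing of the ``inertial'' component of the stacky residue, and the second claim then pins down the resulting element of $\rH^1(k(\xi)^{D_\xi},\Q/\Z)$.

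To prove the inertial component vanishes, I would work \'etale-locally at $\xi$, modeling the stack as $[\Spec\cO^h_{X,\xi}/D_\xi]$. The inertia $I_\xi$ acts on a uniformizer $\pi$ of $\cO^h_{X,\xi}$ by a faithful character, and the coarse Henselian DVR is $A:=(\cO^h_{X,\xi})^{D_\xi}$, whose uniformizer $\varpi$ satisfies $\varpi=u\pi^{|I_\xi|}$ for some unit $u$. The image in $\Hom(I_\xi,\Q/\Z)$ can be read off after base change to a geometric point of $\Spec k(\xi)^{D_\xi}$, where the gerbe trivializes to $BI_\xi$. Using the description of the residue map from \cite[Prop.\ 2.2]{bssurf}, a class $\alpha\in \Br(k(X)^G)$ lifts to symbols in units and powers of $\varpi$; the contribution of any $\varpi$-symbol to the inertial character is $|I_\xi|$ times a character of the cyclic group $I_\xi$, hence vanishes in $\Hom(I_\xi,\Q/\Z)$ (a group annihilated by $|I_\xi|$).

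To identify the value in $\rH^1(k(\xi)^{D_\xi},\Q/\Z)$, I would compare with the classical residue by pulling the stacky residue back along the \'etale cover $\Spec k(\xi)\to[\Spec k(\xi)/D_\xi]$; this pullback equals the classical residue $\partial_B(\alpha|_{k(X)})\in\rH^1(k(\xi),\Q/\Z)$ of $\alpha$ at $B:=\cO^h_{X,\xi}$. The DVR extension $B/A$ has ramification index $|I_\xi|$ and residue extension $k(\xi)/k(\xi)^{D_\xi}$, so the standard ramification formula for Brauer residues gives
\[
\partial_B(\alpha|_{k(X)})=|I_\xi|\cdot\mathrm{res}^{k(\xi)}_{k(\xi)^{D_\xi}}(\partial_A(\alpha)).
\]
Combined with the gerbe sequence above, which identifies $\rH^1(k(\xi)^{D_\xi},\Q/\Z)$ as pullback from the coarse moduli, compatibly with further pullback to the cover, this forces the stacky residue to equal $|I_\xi|$ times the classical residue of $\alpha$. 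The main obstacle will be controlling the inertial component, which requires a careful unpacking of the residue map of \cite[Prop.\ 2.2]{bssurf} in an \'etale local chart of the quotient stack.
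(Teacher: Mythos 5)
The paper's own proof is a one-line citation: the comparison between the stacky residue of \eqref{eqn.brauerkernel} and the classical residue, including the factor $|I_\xi|$, is exactly the content of \cite[Rem.\ 2.4]{bssurf}, so nothing is re-derived. Your proposal instead tries to reconstruct that comparison from scratch via the gerbe structure of the residual gerbe. The skeleton is reasonable: $[\Spec k(\xi)/D_\xi]$ is indeed an $I_\xi$-gerbe over $\Spec k(\xi)^{D_\xi}$, the inflation--restriction sequence you write is correct, and the ``factors through'' claim is equivalent to the vanishing of the component in $\Hom(I_\xi,\Q/\Z)$.

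There are, however, two genuine gaps. The decisive one is in your last step: the composite $\rH^1(k(\xi)^{D_\xi},\Q/\Z)\to \rH^1([\Spec k(\xi)/D_\xi],\Q/\Z)\to \rH^1(k(\xi),\Q/\Z)$ is just restriction along the Galois extension $k(\xi)/k(\xi)^{D_\xi}$, whose kernel is $\Hom(D_\xi/I_\xi,\Q/\Z)$ --- nonzero whenever $D_\xi\neq I_\xi$, which is precisely the interesting case (e.g.\ Proposition \ref{prop.mapfromH2G}(iii) lives entirely in this kernel). So showing that the stacky residue and $|I_\xi|\cdot\partial_A(\alpha)$ agree after pullback to the atlas $\Spec k(\xi)$ determines the answer only modulo $\Hom(D_\xi/I_\xi,\Q/\Z)$; ``this forces the stacky residue to equal...'' does not follow. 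To close this you would need a $D_\xi$-equivariant cocycle-level comparison (or a comparison over the intermediate quotient $X/I_\xi$), not just pullback to the cover. The second gap is the one you flag yourself: the vanishing of the inertial component is asserted via an unspecified ``$\varpi$-symbol'' computation (which also silently invokes Merkurjev--Suslin to write $\alpha$ as a sum of symbols) and is deferred rather than proved. As written, the proposal is a plausible programme for reproving \cite[Rem.\ 2.4]{bssurf}, but not yet a proof of the lemma.
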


\begin{proof}
This follows from \cite[Rem.\ 2.4]{bssurf}, which relates
the stacky residue map in \eqref{eqn.brauerkernel} to the classical residue map
in the statement of the lemma, and shows that the
former is $|I_\xi|$ times the latter.
\end{proof}

We recall the characterization of the Brauer group of the function field of a smooth projective rational surface $S$ over $k$, see \cite[Thm.\ 1]{AM}:
\begin{equation}
\label{eqn.am}
0\to \Br(k(S)) \to \bigoplus_{\text{curves }C\subset S} \,  \rH^1(k(C), \bQ/\bZ) \stackrel{r}{\lra} \bigoplus_{\text{$k$-points }\mathfrak p\in S}\, \bQ/\bZ. 
\end{equation}
Here the map $r$ is the sum over local ramification indices at points over $\mathfrak p$ of the {\em normalization} of $C$, for all curves $C$ passing through $\mathfrak p$.

\begin{prop}
\label{prop.brauerorbifold}
The Brauer group $\Br([X/G])$ is identified with the subgroup of $\Br(k(X)^G)$ of elements with $|I_\xi|$-torsion residue under the classical residue map to $\rH^1(k(\xi)^{D_{\xi}},\Q/\Z)$
for every orbit representative $\xi$ of codimension $1$ points of $X$.
Combining the residue map for every $\xi$, the resulting homomorphism
\[
\Br([X/G])
\to
\bigoplus_{[\xi] \in X^{(1)}/G} \rH^1(k(\xi)^{D_{\xi}},\Q/\Z).
\]
is injective.
\end{prop}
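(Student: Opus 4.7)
\medskip
\noindent\emph{Proof plan.}
The plan is to combine the identification \eqref{eqn.brauerkernel} of $\Br([X/G])$ with the formula of Lemma~\ref{lem.itfactors}, and then for the injectivity to invoke the classical Artin--Mumford sequence \eqref{eqn.am} on a smooth projective model of the quotient.

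For the first assertion, Lemma~\ref{lem.itfactors} factors the map in \eqref{eqn.brauerkernel} as
\[
\Br(k(X)^{G})\xrightarrow{\,(|I_{\xi}|\cdot\mathrm{res}_{\xi})_{\xi}\,}\bigoplus_{[\xi]}\rH^{1}(k(\xi)^{D_{\xi}},\Q/\Z)\xrightarrow{\,\pi^{*}\,}\bigoplus_{[\xi]}\rH^{1}([\Spec(k(\xi))/D_{\xi}],\Q/\Z),
\]
with $\pi^{*}$ induced componentwise by the coarse moduli maps. I will argue that $\pi^{*}$ is injective on each summand: for any Deligne--Mumford stack the coarse moduli morphism induces a surjection on \'etale fundamental groups, so pullback of $\Q/\Z$-characters is injective; equivalently, viewing $[\Spec(k(\xi))/D_{\xi}]$ as a $BI_{\xi}$-gerbe over its coarse moduli, this is the edge injection in the Hochschild--Serre spectral sequence. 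Consequently $\alpha\in\Br([X/G])$ iff $|I_{\xi}|\cdot\mathrm{res}_{\xi}(\alpha)=0$ for every $\xi$, i.e.\ iff the classical residue is $|I_{\xi}|$-torsion.

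For the injectivity of the combined classical residue map, suppose $\alpha\in\Br([X/G])$ has vanishing classical residue at every $\xi$. Via the injection $\Br([X/G])\hookrightarrow\Br(k(X)^{G})$ it suffices to show $\alpha=0$ in $\Br(k(X)^{G})$. I will pick a smooth projective rational model $Y$ of $X/G$ by resolving the quotient singularities, so that $k(Y)=k(X)^{G}$ and $\Br(Y)=0$; then \eqref{eqn.am} on $Y$ realizes $\alpha$ as determined by its residues at the curves of $Y$, subject to reciprocity at each $k$-point. The curves of $Y$ split into strict transforms of curves on $X/G$, which are in bijection with the orbit representatives $[\xi]$ and have zero residue by hypothesis, and exceptional $\mathbb{P}^{1}$'s over the quotient singularities. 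I will propagate vanishing to the exceptional curves by reciprocity: at a point where an exceptional curve meets a curve of already known trivial residue, the reciprocity relation forces the local ramification on the new exceptional curve to vanish, and since a character of the \'etale fundamental group of an open subset of $\mathbb{P}^{1}$ is determined by its local ramifications at the missing points, triviality at all intersection points forces the full residue on that rational curve to vanish. Traversing the resolution graph outward from the strict transforms trivializes every exceptional residue and yields $\alpha=0$.

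The main obstacle I anticipate is this reciprocity propagation: it requires that every connected exceptional configuration over a quotient singularity of $X/G$ be met by at least one strict transform, which holds because any point of the projective surface $X/G$ lies on many curves, each the image of some orbit $[\xi]$.
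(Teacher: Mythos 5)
Your strategy coincides with the paper's: the first assertion via Lemma \ref{lem.itfactors} (your observation that pullback along the coarse moduli map $[\Spec(k(\xi))/D_\xi]\to\Spec(k(\xi)^{D_\xi})$ is injective on $\rH^1(-,\Q/\Z)$ correctly supplies the step the paper leaves implicit), and the injectivity via the Artin--Mumford sequence \eqref{eqn.am} on a resolution of $X/G$. But the propagation step in your injectivity argument fails as written. You declare an exceptional component trivial only once its local ramification vanishes at \emph{all} of its intersection points, and you obtain vanishing at an intersection point only from a neighbour whose residue is \emph{already} known to be trivial. On a chain in which a strict transform $T$ meets $E_1$, which in turn meets $E_2$, this deadlocks immediately: $E_1$ cannot be processed before $E_2$, nor $E_2$ before $E_1$; likewise an outward traversal gets stuck at any branch point of the exceptional configuration, where reciprocity at the central curve leaves several unknown local ramifications constrained only by their sum. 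The missing ingredient is reciprocity \emph{on each exceptional curve itself}: for $E\cong\bP^1$, the local ramifications of a class in $\rH^1(k(E),\Q/\Z)$ sum to zero, so if all but one vanish, so does the last, and an everywhere-unramified class on $\bP^1$ is trivial. One then inducts from the \emph{leaves} of each exceptional configuration inward: a leaf meets at most one other exceptional curve, so its residue is unramified outside one point, hence trivial; delete it and repeat.

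Relatedly, the condition you flag as the main obstacle (each exceptional configuration being met by a strict transform) is true but not what the argument needs --- the leaf induction uses no strict transforms at all. What it does need, and what you never invoke, is that the exceptional divisor of a resolution of a quotient surface singularity is a simple normal crossing divisor with rational components and \emph{without loops}, i.e., a tree. Without that the induction has nowhere to start and the conclusion is false (a cycle of rational curves supports nontrivial classes killed by no residue); this is precisely the structural input the paper's proof isolates, and the reason, per Remark \ref{rem.brauerorbifold}, that the injectivity statement can fail in higher dimension.
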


\begin{proof}
We have the isomorphism of $\Br([X/G])$ with the kernel \eqref{eqn.brauerkernel}.
By Lemma \ref{lem.itfactors}, this kernel is precisely the subgroup of $\Br(k(X)^G)$, satisfying the condition on residues in the statement of the proposition.
So, the first assertion is proved, and this gives the homomorphism in the second assertion.
It remains to establish injectivity.

Let $\alpha\in \Br(k(X)^G)$ be an element of the kernel.
We let $Y:=X/G$ be the quotient variety and take $S\to Y$ to be a resolution of singularities.
To show that $\alpha=0$, we are reduced by \eqref{eqn.am} to showing that the residue of $\alpha$ vanishes at all components of the exceptional divisor of $S\to Y$.
But the latter is a simple normal crossing divisor, with rational components, without loops.
So we get the triviality of the residues of $\alpha$ at
components of the exceptional divisor from the fact that the image of $\alpha$ in the middle term of \eqref{eqn.am} lies in the kernel of $r$.
\end{proof}

\begin{rema}
\label{rem.brauerorbifold}
The assertion about the injective homomorphism in Proposition \ref{prop.brauerorbifold} relies on the structure of the resolution of quotient surface singularities and may fail in higher dimension.
For instance, if $G$ is the Klein four-group, then the projectivization of the regular representation gives an action on $X\cong \bP^3$ with trivial inertia at all codimension $1$ points, but the exact sequence \eqref{eqn.BrXmodG} yields 
$$
0\ne \rH^2(G,k^\times)\cong \Br([X/G]).
$$
\end{rema}

The following result allows us to 
to determine the image of
$\rH^2(G,k^\times)$ in $\Br([X/G])$,
for given $G$,
in terms of the description 
%of $\Br([X/G])$ 
of Proposition \ref{prop.brauerorbifold},
(see also Remark \ref{rem.mapfromH2G}).

\begin{prop}
\label{prop.mapfromH2G}
Let $\beta\in \rH^2(G,k^\times)$, with image in $\Br([X/G])$ restricting to $\alpha\in \Br(k(X)^G)$.
Let $\xi$ be an orbit representative of codimension $1$ points on $X$.
Then:
\begin{itemize}
\item[(i)] The residue of $\alpha$ in $\rH^1(k(\xi)^{D_{\xi}},\Q/\Z)$ is equal to the residue of the restriction of $\alpha$ to $\Br(k(X)^{D_\xi})$ in $\rH^1(k(\xi)^{D_{\xi}},\Q/\Z)$.
\item[(ii)] The restriction of $\alpha$ to $\Br(k(X)^{D_\xi})$ lies in the image of the inflation map
\[ \mathrm{inf}_{\xi} \colon \rH^2(D_\xi/I_\xi,(k(X)^{I_\xi})^\times)\to \rH^2(D_\xi,k(X)^\times), \]
of the Hochschild-Serre spectral sequence.
\item[(iii)] The residue of $\alpha$ in $\rH^1(k(\xi)^{D_{\xi}},\Q/\Z)$ is identified, under the isomorphism \eqref{eqn:h1-2}
\[ \rH^1(k(\xi)^{D_{\xi}},\Q/\Z)\cong \rH^2(k(\xi)^{D_{\xi}},\Z), \]
%coming from the exact sequence $0\to \Z\to \Q\to \Q/\Z\to 0$, 
with the image under the valuation map
\[ \rH^2(D_\xi/I_\xi,(k(X)^{I_\xi})^\times)\to \rH^2(D_\xi/I_\xi,\Z) \]
of the lift of the restriction of $\alpha$ to $\Br(k(X)^{D_\xi})$ under the map $\mathrm{inf}_{\xi}$ in $\mathrm{(ii)}$.
The action of $D_\xi$ on $k(\xi)$ expresses $D_\xi/I_\xi$ as a quotient of the absolute Galois group of $k(\xi)^{D_\xi}$; the
corresponding inflation map sends $\rH^2(D_\xi/I_\xi,\Z)$ to $\rH^2(k(\xi)^{D_\xi},\Z)$.
\end{itemize}
\end{prop}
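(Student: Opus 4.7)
The three assertions are proved in sequence. Throughout we use that $k(X)/k(X)^G$, $k(X)/k(X)^{D_\xi}$, and $k(X)/k(X)^{I_\xi}$ are all Galois (with groups $G$, $D_\xi$, and $I_\xi$, respectively), that $I_\xi\triangleleft D_\xi$, and that $\alpha$ is the image of $\beta\in \rH^2(G,k^\times)$ under $k^\times\hookrightarrow k(X)^\times$ combined with Hilbert 90.

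For (i), observe that $k(X)^{D_\xi}/k(X)^G$ is unramified at the place $[\xi]$ with trivial residue extension: the ramification index of $k(X)/k(X)^G$ at $\xi$ equals $|I_\xi|$, as does that of $k(X)/k(X)^{D_\xi}$, so multiplicativity forces $e(k(X)^{D_\xi}/k(X)^G)=1$, and both residue fields equal $k(\xi)^{D_\xi}$. The claim then follows from the standard compatibility of the classical Brauer residue with restriction along such an unramified extension.

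For (ii), use the Hochschild--Serre spectral sequence for $I_\xi\triangleleft D_\xi$ with coefficients in $k(X)^\times$,
\[
E_2^{p,q}=\rH^p(D_\xi/I_\xi,\rH^q(I_\xi,k(X)^\times))\Rightarrow \rH^{p+q}(D_\xi,k(X)^\times).
\]
Hilbert 90 gives $\rH^1(I_\xi,k(X)^\times)=0$, whence $E_2^{0,1}=E_2^{1,1}=0$. Under the edge map $\rH^2(D_\xi,k(X)^\times)\to \rH^2(I_\xi,k(X)^\times)^{D_\xi/I_\xi}$ (restriction), the class $\alpha|_{D_\xi}$ is sent to the image of $\beta|_{I_\xi}\in \rH^2(I_\xi,k^\times)$, which vanishes because $I_\xi$ is cyclic (as observed in the construction of the residue map) and $\rH^2(C,k^\times)=0$ for any cyclic group $C$. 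Combined with $E_\infty^{1,1}=0$, this forces $\alpha|_{D_\xi}$ into the image of the inflation edge map $\mathrm{inf}_\xi$ from $E_2^{2,0}=\rH^2(D_\xi/I_\xi,(k(X)^{I_\xi})^\times)$.

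For (iii), pass to completions at $v_\xi$. Let $K_0$ and $K_1$ denote the completions of $k(X)^{D_\xi}$ and $k(X)^{I_\xi}$ at the (unique) places above $v_\xi$. Then $K_1\cong k(\xi)((s))$ for a suitable uniformizer $s$, and $K_1/K_0$ is an unramified Galois extension of complete discretely valued fields with Galois group $D_\xi/I_\xi$ and residue extension $k(\xi)/k(\xi)^{D_\xi}$. The valuation $v\colon K_1^\times\to\Z$ is $D_\xi/I_\xi$-equivariant, and the short exact sequence of $D_\xi/I_\xi$-modules
\[
0\to \cO_{K_1}^\times\to K_1^\times\xrightarrow{v}\Z\to 0,
\]
combined with the reduction $\cO_{K_1}^\times\twoheadrightarrow k(\xi)^\times$ (splitting on units via Hensel), identifies the classical Brauer residue on classes pulled back from $\rH^2(D_\xi/I_\xi,K_1^\times)$ with the composite
\[
\rH^2(D_\xi/I_\xi,K_1^\times)\xrightarrow{v_*} \rH^2(D_\xi/I_\xi,\Z)\xrightarrow{\mathrm{inf}} \rH^2(k(\xi)^{D_\xi},\Z)\cong \rH^1(k(\xi)^{D_\xi},\Q/\Z),
\]
where the second map is inflation along the surjection $G_{k(\xi)^{D_\xi}}\twoheadrightarrow D_\xi/I_\xi$ afforded by the Galois residue extension. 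Since the global valuation on $(k(X)^{I_\xi})^\times$ factors through the local one, the globally defined $v_*(\tilde\alpha)$ agrees with its local counterpart, yielding (iii). The \emph{main obstacle} is this last identification: verifying that for classes inflated from $\rH^2(D_\xi/I_\xi,K_1^\times)$ the classical Brauer residue equals $v_*$ followed by inflation. This is a classical local-class-field-theory computation, but matching the cohomological bookkeeping---in particular the choice of uniformizer compatible with the $D_\xi/I_\xi$-action, and the passage between global and local valuations---requires careful verification.
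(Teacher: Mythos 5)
Your proposal is correct and follows essentially the same route as the paper's proof: (i) via the unramified, residue-field-preserving extension $k(X)^{D_\xi}/k(X)^G$ at the place below $\xi$ (the paper phrases this as étaleness of $X/D_\xi\to X/G$ near $[\xi]$), (ii) via Hochschild--Serre plus Hilbert 90 reducing to the vanishing of $\beta|_{I_\xi}\in\rH^2(I_\xi,k^\times)=0$ for cyclic $I_\xi$, and (iii) via the standard computation of the residue from a cocycle over the unramified extension $k(X)^{I_\xi}$ by applying the valuation. Your treatment of (iii) through completions is just a more explicit unwinding of the paper's ``recipe,'' and the step you flag as the main obstacle is exactly the classical fact the paper invokes without further comment.
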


\begin{proof}
The map of quotient varieties $X/D_\xi\to X/G$ induces an isomorphism of residue fields at the points corresponding to $\xi$.
As well, it is \'etale in a neighborhood of $[\xi]$.
The residue map commutes with restriction by an \'etale morphism, so we have (i).

In the remainder of the proof, for notational simplicity, we suppose that $D_\xi=G$.
By a combination of the Hochschild-Serre spectral sequence and Hilbert's Theorem 90, the assertion in (ii) is equivalent to the triviality of the restriction of $\alpha$ to $\Br(k(X)^{I_\xi})$.
But $I_\xi$ is cyclic, so $\rH^2(I_\xi,k^\times)=0$, and the formation of $\alpha$ commutes with restriction.

For (iii), we only need to recall that a recipe to compute the residue of $\alpha$ is to find a finite Galois extension of $k(X)^G$, unramified over $[\xi]$, and a $2$-cocycle for the Galois group, whose class inflates to $\alpha$.
Applying the valuation to the $2$-cocycle, to get a $\Z$-valued $2$-cocycle, leads to a class which becomes the residue of $\alpha$, when we make the identification with a $\Q/\Z$-valued $1$-cocycle as in statement (iii). By (ii), we have this for the Galois extension $k(X)^{I_\xi}$.
\end{proof}

\begin{rema}
\label{rem.mapfromH2G}
The recipe in Proposition \ref{prop.mapfromH2G} (iii) comes down to obtaining a homomorphism $D_\xi/I_\xi\to \Q/\Z$.
This is determined by its restriction to cyclic subgroups, thus the computation is reduced to a treatment of the case $D_\xi/I_\xi$ is cyclic.
Then $D_\xi$ is a central extension of cyclic groups, so, bicyclic:
\[ D_\xi\cong \Z/m\Z\oplus \Z/n\Z, \]
with
$\rH^2(D_\xi,k^\times)$ cyclic of order $d:=\gcd(m,n)$; the inflation map
\[
\rH^2(\Z/d\Z\oplus\Z/d\Z,k^\times)\to \rH^2(D_\xi,k^\times)
\]
is an isomorphism.
This lets us reduce further to the case $$
m=n=d=|I_\xi|, \quad  I_\xi=\Z/d\Z\oplus 0.
$$
Let $\zeta$ denote a chosen primitive $d$th root of unity,
by which $\Z/d\Z\cong\mu_d$, and
let us write
$k(X)^{0\oplus \Z/d\Z}=k(X)^{D_\xi}(\gamma^{1/d})$,
where the chosen generator of $I_\xi$ acts on $\gamma^{1/d}$ by multiplication by $\zeta$.
A generator of 
$$
\rH^2(D_\xi,k^\times)\cong \rH^2(D_\xi,\mu_{\infty})
$$ 
is the cup product of the projections to the two factors.
The image in $\rH^2(D_\xi,k(X)^\times)$, in Proposition \ref{prop.mapfromH2G} (ii),
lifts to the class represented by $\gamma^{-1}\in k(X)^{D_\xi}$, under the standard isomorphism of $\rH^2(\Z/d\Z,(k(X)^{I_\xi})^\times)$ with a quotient by norms of $(k(X)^{D_\xi})^\times$.
\end{rema}

For the next result we impose the hypothesis that the $G$-action on $X$ is in \emph{standard form}, see \cite[Sect.\ 7.2]{HKTsmall}.
This means that there is a simple normal crossing divisor on whose complement $G$ acts freely, such that the $G$-orbit of every component of the divisor is smooth; general $X$ may be brought into standard form by a sequence of equivariant blow-ups.

\begin{coro}
\label{cor.puttogether}
Suppose that the $G$-action on $X$ is in standard form,
and let
$$
\alpha_\xi\in \rH^1(\Spec(k(\xi)^{D_{\xi}}),\Q/\Z),\qquad |I_\xi|\alpha_\xi=0,
$$
be given, for every orbit representative $\xi$ of codimension $1$ points of $X$.
There exists an $\alpha\in \Br([X/G])$, mapping under the residue map to
$(\alpha_\xi)_{[\xi]\in X^{(1)}/G}$, if and only if the local ramification indices at points over $\mathfrak p$ sum to $0$, for every orbit representative $\mathfrak p$ of $k$-points of $X$.
\end{coro}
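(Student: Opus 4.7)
The plan is to combine Proposition \ref{prop.brauerorbifold}, identifying $\Br([X/G])$ as a subgroup of $\Br(k(X)^G)$, with the Auslander--Mumford sequence \eqref{eqn.am} applied to a resolution of singularities $S\to Y:=X/G$. We have $k(S)=k(X)^G$, and this sequence identifies $\Br(k(S))$ with the kernel of the ramification-sum map $r$. The curves on $S$ split into strict transforms of the curves $C_\xi:=\overline{\{\xi\}}/D_\xi\subset Y$ (whose function field is $k(\xi)^{D_\xi}$) and exceptional divisors of $S\to Y$. Above each quotient surface singularity of $Y$, the exceptional divisor is a tree of smooth rational curves.

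To produce an $\alpha\in\Br([X/G])$ restricting to $\alpha_\xi$ at each $C_\xi$, I would assign $\alpha_\xi$ as the residue on the strict transform of $C_\xi$ and seek residues $\beta_E\in \rH^1(k(E),\Q/\Z)$ on each exceptional curve $E$ so that the total data lies in $\ker(r)$, i.e., sums to zero at every $k$-point of $S$. By the $|I_\xi|$-torsion hypothesis combined with Proposition \ref{prop.brauerorbifold}, the resulting class in $\Br(k(S))$ automatically lies in the subgroup $\Br([X/G])$, so the problem reduces to the existence of the $\beta_E$'s.

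I would then separate the $k$-points of $S$ by location. A $k$-point of $S$ off the exceptional locus maps isomorphically to a smooth $k$-point $\bar{\mathfrak{p}}\in Y$, which corresponds to a $G$-orbit of $k$-points of $X$ whose stabilizer is generated by pseudo-reflections; at such a point, the sum-zero condition involves only the $\alpha_\xi$'s and, via the standard compatibility of classical residues under the cover $\overline{\{\xi\}}\to C_\xi$ (which, since $I_\xi$ acts trivially on $\overline{\{\xi\}}$, is Galois with group $D_\xi/I_\xi$ on a dense open), translates into the stated sum-zero condition at the corresponding orbit representative $\mathfrak{p}\in X$. At $k$-points of $S$ on the exceptional divisor above a singular point $\bar{\mathfrak{p}}\in Y$, arising from an orbit representative $\mathfrak{p}\in X$ with non-pseudo-reflection stabilizer, the analysis is inductive along the exceptional tree: each component $E\cong\PP^1$ has $\rH^1(k(E),\Q/\Z)$ realizing arbitrary residue prescriptions at finitely many points subject only to a single sum-zero constraint. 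Processing the tree from its leaves inward, the $\beta_E$'s are determined up to one global consistency condition, namely that the total residue flowing into the tree from the strict transforms meeting it sums to zero; by the same residue-transfer compatibility this total equals the sum of local ramification indices of the $\alpha_\xi$ at points of the normalization of $\overline{\{\xi\}}$ over $\mathfrak{p}$.

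The main obstacle will be executing the inductive argument on the exceptional tree and verifying that the resulting global obstruction at each singular $\bar{\mathfrak{p}}\in Y$ matches the stated sum-zero condition at $\mathfrak{p}\in X$. This relies on the tree structure (connected and simply connected) of the dual graph of the exceptional divisor of any quotient surface singularity, so that no cohomological obstruction arises at interior nodes, and on the standard residue-transfer formula for the finite cover $\overline{\{\xi\}}\to C_\xi$ to identify residues on $C_\xi$ with ramification indices on $\overline{\{\xi\}}$ at points lying over $\mathfrak{p}$.
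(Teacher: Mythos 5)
Your argument is a faithful (and much more detailed) expansion of exactly what the paper does: the paper's proof simply notes that $X/G$ has cyclic quotient singularities whose minimal resolution has chains of rational curves as exceptional divisors, and then concludes by the Artin--Mumford sequence \eqref{eqn.am}, which is precisely your flow argument on the exceptional tree combined with Proposition \ref{prop.brauerorbifold}. The approach and all key ingredients match; your version just makes explicit the bookkeeping the paper leaves to the reader.
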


\begin{proof}
The quotient variety $X/G$ has cyclic quotient singularities, over which a minimal resolution has chains of rational curves as exceptional divisors.
We conclude by the exact sequence \eqref{eqn.am}.
\end{proof}

\begin{rema}
Computations of Brauer groups of Deligne-Mumford stacks have been carried out in, e.g., \cite{ant}, however, mostly in the setting of points and curves with nontrivial generic stabilizers. 
\end{rema}

\section{Examples}
\label{sect:exam}
We give representative examples of computations of $\rH^1(G,\Pic(X))$.
In the absence of fixed points, when there is the possibility of subtle interplay with the other terms in \eqref{eqn.BrXmodG}, we also comment on these.

\subsection{Cyclic groups}
\label{sect:cyclic-h1}
Let $G$ be a cyclic group. 
In this case, there are always fixed points. The maps 
$\delta_2$ and $\delta_3$ are zero maps, and 
$$
\Br([X/G]) = \rH^1(G,\Pic(X)).
$$
Corollary \ref{cor.puttogether} allows us to compute the left side. Given the results in \cite{BP} and \cite{shinder}, we focus on cases when 
\begin{itemize}
\item The order of $G$ is composite,
\item some nontrivial stabilizers are proper subgroups of $G$. 
\end{itemize}

We assume that $X$ is {\em minimal}, i.e., $\Pic(X)^G\cong \bZ$ (del Pezzo case) or $\bZ^2$ (conic bundle case). 
We use the classification of nonlinear actions, summarized in \cite[Table 2]{blancsubgroups} and \cite[Chapter 8]{blanc-thesis}.

\begin{itemize}
\item[$(\mathbf{dJ})$]
We have
$$
G=C_{2n}=\langle \alpha\rangle \subset \mathrm{Cr}_2,
$$ 
and $\alpha^n$ is a de Jonqui\`eres involution, described in detail in \cite[Prop.\ 7.6.2, Prop.\ 7.6.3]{blanc-thesis}. 
The fixed locus of $\alpha^n$ contains a unique smooth curve, 
which is a hyperelliptic curve 
$$
X\supset C\stackrel{\pi}{\lra} \bP^1.
$$
The induced action of $\alpha$ on the base $\bP^1$ is cyclic of order $n$, with two fixed points, disjoint from the branch locus of $\pi$.
Thus the branch locus consists of $rn$ points, for some positive integer $r$, and $\mathsf{g}(C)=(rn-2)/2$.
There are three cases, depending on the number of fixed points for the action of $\alpha$ on $C$.

Applying Corollary~\ref{cor.puttogether} and the Riemann-Hurwitz formula for the genus $\mathsf{g}'=\mathsf{g}(C/G)$ of the quotient curve $C/G$ we obtain
$$
\rH^1(G, \Pic(X)) = \begin{cases}
(\bZ/2\bZ)^{r-2} & \text{4 fixed points, } \mathsf{g}'=(r-2)/2, \\
(\bZ/2\bZ)^{r-1} & \text{2 fixed points, } \mathsf{g}'=(r-1)/2,\\
(\bZ/2\bZ)^r & \text{no fixed points, } \mathsf{g}'=r/2.
\end{cases}
$$
\item[$(\mathbf{ndJ})$] We consider two actions, of $G=C_6$, described in \cite[Table 1, Table 2]{blancsubgroups} and in \cite{blanc-thesis}, using the notation of the latter to label the cases.  

\medskip
\noindent
Case {\tt 3.6.1}: We consider an action of $G=C_6=\langle \alpha\rangle$ on the cubic surface $X\subset \bP^3$, with equation
$$
w^3+x^3+y^3+xz^2+\lambda yz^2 =0, 
$$
with action by $\alpha$ on the coordinates $(w,x,y,z)$ by the weights
$$
[\zeta:1:1:-1], \quad  \zeta_3=e^{2\pi i /3}.
$$
There are
four points fixed by $\alpha$:
\[ \quad \,\, (0:1:-1:0),\, (0:1:-\omega:0),\, (0:1:-\omega^2:0),\, (0:0:0:1).  \]
We have a curve of genus $1$, given by $w=0$, fixed by $\alpha^2$, 
%have point $(0:0:0:1)$, 
and another curve of genus $1$, given by $z=0$, fixed by $\alpha^3$.
Taking the quotients of these curves by residual actions we obtain rational curves with four, respectively three fixed points.
This allows us to conclude that 
$$
\Br([X/G])=0,
$$
since $2$-torsion and $3$-torsion elements summing to $0$ in $\bQ/\bZ$ must be $0$ individually in \eqref{eqn.am}.

\medskip
\noindent
Case {\tt 2.6}: We consider another action of $G=C_6$, on a del Pezzo surface of degree 2, $X\subset \bP(2,1,1,1)$, with equation
$$
w^2=x^3y+y^4+z^4+\lambda y^2z^2, 
$$
and with action on the coordinates $(w:x:y:z)$ by 
$$
[-1:\zeta_3:1:-1].
$$
The point $(0:1:0:0)$ is fixed by $\alpha$, there is a curve of genus $1$, given by  $x=0$, fixed by $\alpha^2$, and additional points
\[ (0:1:-1:0),\quad (0:1:-\omega:0),\quad (0:1:-\omega^2:0) \]
fixed by $\alpha^3$. The quotient of the genus 1 curve by residual action is again of genus $1$.
We conclude that 
$$
\Br([X/G])\cong (\Z/3\Z)^2.
$$
\end{itemize}

\subsection{Noncyclic actions, with fixed points}
In this case, the maps $\delta_2$ and $\delta_3$ are trivial, and we obtain the exact sequence
$$
0\to \rH^2(G,k^\times)\to \Br([X/G])\to \rH^1(G,\Pic(X))\to 0.
$$

\medskip
\noindent
Case {\tt 3.33.1}:  
Consider the action of $G=(\Z/3\Z)^2$ on the diagonal cubic surface 
$X\subset \bP^3$, with equation
\begin{equation}
\label{eqn:cub-dia}
w^3+x^3+y^3+z^3=0,
\end{equation}
where the generators of $G$ 
act via 
$$
g_1:=[\zeta:1:1:1] \quad \text{ and } \quad g_2:=[1:1:1:\zeta].
$$
In the number-theoretic setup, this action was realized as a Galois action, and \cite[Prop. 1]{ct-kanevsky} computed
$$
\rH^1(G,\Pic(X)) = \bZ/3.
$$
In our geometric approach, we find two elliptic curves $E_1,E_2$ fixed by the generators $g_1,g_2$, and intersecting in three points, given by $x^3+y^3=0$, and fixed by $G$. The quotients by the induced $\bZ/3$ on each curve are $\bP^1$. We find
$$
\Br([X/G])=(\bZ/3)^2.
$$
Taking into account that 
$$
\rH^2(G,k^\times) = \bZ/3,
$$
we conclude that
$$
\rH^1(G,\Pic(X)) = \bZ/3.
$$

\subsection{Noncyclic actions, without fixed points}
We turn to 

\medskip

\noindent
Case {\tt 3.333}: Consider the action of $G=(\Z/3\Z)^3$ on \eqref{eqn:cub-dia} via $g_1,g_2,$ and 
$$
g_3:=[1:1:\zeta:1].
$$
There are no fixed points. We know that $\delta_2$ is the zero map, since 
$$
\Pic(X,G)=\Pic(X)^G=\bZ (K_X),
$$
and that 
$$
\rH^2(G,k^\times) = (\bZ/3)^3, \quad \rH^3(G, k^\times) = 
(\bZ/3)^7.
$$
There are four elliptic curves
$E_1,E_2,E_3$, fixed by the generators, and $E_4$, fixed by the product of the generators $g_1,g_2,g_3$, respectively. Corollary \ref{cor.puttogether} gives 
$$
\Br([X/G])\cong (\Z/3)^3.
$$
We have $\delta_2$ trivial, $\rH^2(G,k^\times)$ mapping isomorphically to $\Br([X/G])$, and $\delta_3$ mapping 
$$
\rH^1(G,\Pic(X))\cong\Z/3,
$$
(known by \cite[Prop.\ 1]{ct-kanevsky}),
isomorphically to its image in $\rH^3(G,\bQ/\bZ)$; in particular, \emph{$\delta_3$ is nontrivial}.

\subsection{Dihedral group of order 8}
\label{sect:d8}
We consider an action of the dihedral group of order 8 on a minimal del Pezzo surface of degree 4, described in \cite[Sect.\ 6]{DI}. Let $X\subset \bP^4$ 
be given as an intersection of two quadrics
$$
x_0^2+x_1^2+x_2^2+x_3^2+x_4^2 = x_0^2+ax_1^2-x_2^2-ax_3^2=0, \quad a\neq -1,0,1.
$$
Let 
$$
G:= \langle \iota_0,\iota_2, \tau\rangle,
$$
where $\iota_i$ switches the sign of $x_i$, for $i=0$, $2$, and 
$$
\tau\colon (x_0:x_1:x_2:x_3:x_4) \mapsto (-x_2:-x_3:-x_0:-x_1:x_4).
$$
We have $\Pic(X)^G=\bZ$. 

We proceed to analyze curves in the fixed-point locus of non-identity elements of $G$. 
The elements $\iota_i$, $i=0,2$, fix the genus $1$ curves 
$$
E_i:=X\cap \{ x_i=0\},
$$
$E_0$ and $E_2$ meet in $4$ points, and
$\iota_i$ acts on $E_{2-i}$, fixing the $4$ points.
The map $\tau$ exchanges the genus $1$ curves. 

The conic
\[ C\,\,\colon\, x_0+x_2=x_1+x_3=2x_0^2+2x_1^2+x_4^2=0 \]
is fixed by $\tau$, with action by $\iota_0\iota_2$ fixing $2$ points.
The conic
\[ C'\,\,\colon\, x_0-x_2=x_1+x_3=2x_0^2+2x_1^2+x_4^2=0 \]
is similarly fixed by $\iota_0\iota_2\tau$. 

Two of the $4$ points are fixed by all of $G$, and the other two form a single $G$-orbit.
In particular, the maps $\delta_2$ and $\delta_3$ in \eqref{eqn.BrXmodG} are trivial.

We blow up the $4$ points to obtain $\widetilde{X}$, in standard form.
The exceptional divisor is in the fixed-point locus of $\iota_0\iota_2$.
Two of the components have a residual fixed-point free action of $\Z/2\Z\oplus \Z/2\Z$, the other two have a residual action of a group of order $2$.
For the computation of $\Br([X/G])$ we needed to pass to the model $\widetilde{X}$ to be able to apply
Corollary \ref{cor.puttogether}.
The result is:
\[ \Br([X/G])\cong (\Z/2\Z)^2. \]
Since $\rH^2(G,k^\times)= \Z/2\Z$, we obtain
\[ \rH^1(G,\Pic(X))=\Z/2\Z. \]

\section{Comparison with the equivariant Burnside group}
\label{scn.comparison}
In \cite{BnG} we have defined the \emph{equivariant Burnside group}
$$
\Burn_n(G),
$$
an abelian group capturing equivariant birational invariants, i.e., invariants of $G$-actions on algebraic varieties of dimension $n$ modulo equivariant birational maps. 
This group is generated by symbols
$$
(H,Z\actsfromleft K, \beta),
$$
where $H\subseteq G$ is an abelian group, 
$Z\subseteq Z_G(H)/H$, acting on a function field of transcendence degree $d\le n$ and 
$\beta$ is an unordered $(n-d)$-tuple of nontrivial characters of $H$, generating the character group. 
These are subject to certain relations; see \cite[Sect.\ 4]{BnG}.

The invariant of a faithful action of a finite group $G$ on a function field is computed on an appropriate model $X$ (where the $G$-action is in standard form), and is given by
$$
[X\actsfromright G]:=\sum \,\,
(H,Z\actsfromleft K, \beta) \in \Burn_n(G),
$$
a sum of contributions from loci with nontrivial stabilizers $H$, see \cite{BnG} and \cite{HKTsmall} for definitions and examples. In particular, an embedding $G\hookrightarrow \mathrm{Cr}_n$, up to conjugation in $\mathrm{Cr}_n$, gives rise to a well-defined class in $\Burn_n(G)$.

There is a subgroup
$$
\Burn_n^{\mathrm{inc}}(G) \subset \Burn_n(G)
$$
spanned by {\em incompressible} divisorial symbols  \cite[Defn. 3.3 and Prop. 3.4]{KT-vector}. 
This subgroup is a direct summand, and we can consider 
the projection
$$
\mathrm{inc}\colon \Burn_n(G)\to \Burn_n^{\mathrm{inc}}(G).
$$
Given an embedding 
$$
\iota\colon G\hookrightarrow \mathrm{Cr}_n,
$$
we obtain a sum of incompressible divisorial symbols
\[
\mathrm{inc}([\iota])\in \Burn_n^{\mathrm{inc}}(G).
\]

Concretely, when $n=2$, the incompressible divisorial symbols are those where $K$ is the function field of a curve of positive genus or $Z$ is noncyclic, acting on $K\cong k(t)$ \cite[Prop. 3.6]{KT-vector}.

A related invariant, for actions of elements $g$ of finite order
of the plane Cremona group, was introduced in \cite{deFer} and refined in \cite{blancsubgroups}:
\begin{itemize}
\item {\em Normalized fixed curve}: $\mathrm{NFC}(g)$ is the normalization of the component of positive genus of the fixed curve of $g$, or $\emptyset$ if no such component exists.
(It is known that there can be at most one such component.)
\item {\em Normalized fixed curve with action}:
say $g\in G$ has order $m$, then
$$
\mathrm{NFCA}(g):=\left((\mathrm{NFC}(g^r ), g|_{\mathrm{NFC}(g^r)})  \right)^{m-1}_{r=1},
$$
where the second factor records the residual automorphism on the fixed curve.
\end{itemize}
This invariant distinguishes {\em cyclic} actions, in the following sense:

\begin{theo}
\cite{blancsubgroups}
\label{thm:blanc}
Two cyclic subgroups $G$ and $G'$ of $\mathrm{Cr}_2$
of the same order are conjugate if and only if
$$ 
\mathrm{NFCA}(g) = \mathrm{NFCA}(g'), 
$$
for some generators $g$ of $G$ and $g'$ of $G'$.
\end{theo}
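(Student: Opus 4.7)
The plan is to establish the two directions separately, using the structural results summarized in Section~\ref{sect:ab}: for the easy direction one checks invariance under equivariant birational modifications, while for the hard direction one reduces to a minimal model and invokes the classification of cyclic subgroups of $\mathrm{Cr}_2$.

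\textbf{Forward direction.} First I would show that $\mathrm{NFCA}(g)$ depends only on the conjugacy class of $\langle g\rangle$ in $\mathrm{Cr}_2$. Suppose $G$ acts on smooth projective rational surfaces $X$ and $X'$ and that there is a $G$-equivariant birational map $X\dashrightarrow X'$. By equivariant weak factorization, it suffices to treat a single $G$-equivariant blow-up $X'\to X$ along a $G$-orbit of smooth centers (points). For any $g^r$, the fixed locus of $g^r$ on $X'$ differs from that on $X$ by adding components inside the exceptional divisor, which are unions of rational curves (fibers of projectivized fixed subbundles of the normal bundle) together with strict transforms. The component of positive genus, if any, can only come from the strict transform of the unique positive-genus component on $X$, so $\mathrm{NFC}(g^r)$ and the induced residual automorphism are unchanged. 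Thus $\mathrm{NFCA}(g)$ descends to an invariant of the conjugacy class of $\langle g\rangle$ in $\mathrm{Cr}_2$ together with a choice of generator.

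\textbf{Reverse direction.} Here I would use the equivariant MMP to replace each cyclic action by a minimal $G$-surface: either a $G$-conic bundle over $\bP^1$ or a $G$-del~Pezzo surface. The classification of cyclic subgroups of $\mathrm{Cr}_2$, as recalled in Section~\ref{sect:ab} following \cite{blanc-thesis} and \cite{blancsubgroups}, splits these into two disjoint families:
\begin{itemize}
\item[$(\mathbf{dJ})$] de Jonqui\`eres cyclic groups $C_{2n}=\langle\alpha\rangle$ with $\alpha^n$ an involution fixing a hyperelliptic curve $C$, and residual action of $C_n$ on $C$;
\item[$(\mathbf{ndJ})$] the finite list of 29 families in \cite[Table 2]{blancsubgroups}.
\end{itemize}
For $(\mathbf{dJ})$, the key point is Bertini's theorem on de Jonqui\`eres involutions: the conjugacy class of the involution $\alpha^n$ in $\mathrm{Cr}_2$ is determined by the birational class of its fixed hyperelliptic curve $C$, and the full conjugacy class of $\langle\alpha\rangle$ is then determined by this together with the residual action of the quotient $C_n$ on $C$. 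This is precisely the data encoded in $\mathrm{NFCA}(\alpha^n)$ together with the relevant $\mathrm{NFCA}(\alpha^r)$ for $r\mid n$. For $(\mathbf{ndJ})$, the classification list provides explicit equations, and one verifies by direct inspection of each family in \cite[Table 2]{blancsubgroups} that the tuple of positive-genus fixed curves with residual actions, read off from the equations, separates the 29 conjugacy classes from each other and from all $(\mathbf{dJ})$ classes (for instance, most $(\mathbf{ndJ})$ classes have no positive-genus fixed curve at all, or have elliptic or small-genus components which collide with $(\mathbf{dJ})$ only in controlled ways).

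\textbf{Main obstacle.} The serious work is the case-by-case matching in the $(\mathbf{ndJ})$ list together with the separation from the $(\mathbf{dJ})$ families, ensuring that no two distinct conjugacy classes yield the same $\mathrm{NFCA}$. A particular subtlety is to verify that within the $(\mathbf{dJ})$ family, the pair consisting of the birational class of $C$ and the residual $C_n$-action really is a \emph{complete} invariant; this rests on Bayle--Beauville--Blanc-type normal forms for de Jonqui\`eres elements, reducing the $G$-equivariant birational classification on the $\bP^1$-bundle containing $C$ to the classification of pairs $(C,\text{automorphism})$ up to isomorphism. Once this rigidity is in place, combining it with the explicit inspection of the $(\mathbf{ndJ})$ table yields the theorem.
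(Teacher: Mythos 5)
This statement is not proved in the paper at all: Theorem~\ref{thm:blanc} is quoted verbatim from \cite{blancsubgroups}, so there is no internal argument to compare yours against. Judged on its own terms, your outline does track the strategy of the cited source: the forward direction (invariance of $\mathrm{NFCA}$ under $G$-equivariant birational maps, via equivariant factorization into blow-ups of orbits of points and the observation that exceptional curves are rational, so the positive-genus component and its residual action survive only through strict transforms) is essentially complete and correct, and the reduction of the converse to minimal models plus the dichotomy $(\mathbf{dJ})$/$(\mathbf{ndJ})$ is the right frame.

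The genuine gap is that in the reverse direction you have only named, not supplied, the two steps that constitute the entire content of the theorem. For $(\mathbf{dJ})$ you assert that the conjugacy class of $\langle\alpha\rangle$ is determined by the fixed hyperelliptic curve together with the residual $C_n$-action; this is precisely the rigidity statement to be proved, and it does not follow formally from the Bayle--Beauville classification of involutions (which handles only $\alpha^n$) --- one must show that an isomorphism of pairs $(C,\text{automorphism})$ can be promoted to a conjugation of the full order-$2n$ groups, which requires constructing an equivariant birational map between the two conic-bundle models realizing the actions. For $(\mathbf{ndJ})$ you defer to ``direct inspection'' of the 29 families, including checking that they are separated from each other and from all $(\mathbf{dJ})$ classes; note that several $(\mathbf{ndJ})$ classes have $\mathrm{NFC}(g^r)=\emptyset$ for all $r$, so one must also prove that any two such actions of the same order are conjugate (e.g.\ both linearizable), which is again a nontrivial classification input and not a tabulation. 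As written, the proposal is a correct roadmap to Blanc's proof rather than a proof; if you intend to cite the result, as the paper does, that is fine, but the sketch should not be presented as establishing it.
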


We now explain the relation between this invariant and the Burnside group formalism, 
for cyclic groups $G$.  
We fix a generator $g$ of $G$, and consider an embedding $\iota\colon G\to \mathrm{Cr}_2$.
If the term $(H,Z\actsfromleft K,\beta)$ arises
in $\mathrm{inc}([\iota])$,
then $Z=G/H$, $H=\langle g^r\rangle$ for some $r$, and the $r$th component of $\mathrm{NFCA}(g)$ consists of a curve of positive genus with function field $K$, with an action of $\bar g\in G/H$.
It is clear that $\mathrm{NFCA}(g)$ is completely determined by $\mathrm{inc}([\iota])$.

However, except when $r$ is half the order of $G$, the $\beta$-component of
$(H,Z\actsfromleft K,\beta)$ carries extra information, which is not captured by $\mathrm{NFCA}(g)$.
This sheds some light on the formulation of Theorem \ref{thm:blanc}, which is stated in terms of conjugacy of subgroups rather than conjugacy of chosen generators in the Cremona group.
Even with the $\beta$-components, however, Theorem \ref{thm:blanc} cannot be
directly strengthened
to a characterization of
conjugacy classes of \emph{embeddings} $G\hookrightarrow \mathrm{Cr}_2$, as the next example reveals.

\begin{exam}
\cite[Thm. 4]{blancsubgroups}
\label{exa.iota}
Let $G=\langle g\rangle$ be a finite cyclic group of order $4$,
and let $S$ be the degree $1$ del Pezzo surface
defined by
\[ w^2=z^3+z(ax^4+bx^2y^2+cy^4)+xy(a'x^4+b'x^2y^2+c'y^4) \]
in $\bP(3,1,1,2)$.
For general coefficients $a$, $b$, $c$, $a'$, $b'$, $c'$,
the embeddings
\[ \iota, \iota'\colon G\to \Aut(S)\subset \mathrm{Cr}_2, \]
where $g$ acts
by scalar multiplication on the coordinates $w$, $x$, $y$, $z$ by \[ [i:1:-1:-1],\quad \text{respectively}\quad [-i:1:-1:-1], \]
are not conjugate.
But
$$
\mathrm{inc}([\iota]) = \mathrm{inc}([\iota'])
$$
in $\Burn^{\mathrm{inc}}_2(G)$. 
\end{exam}

On the other hand, Theorem~\ref{thm:blanc} does imply that 
$\mathrm{inc}([\iota])$  determines 
$\rH^1(G,\Pic(X))$,
for cyclic groups $G$.

\bibliographystyle{plain}
\bibliography{burnsimple}

\begin{thebibliography}{10}

\bibitem{ant}
B.~Antieau and L.~Meier.
\newblock The {B}rauer group of the moduli stack of elliptic curves.
\newblock {\em Algebra Number Theory}, 14(9):2295--2333, 2020.

\bibitem{AM}
M.~Artin and D.~Mumford.
\newblock Some elementary examples of unirational varieties which are not
  rational.
\newblock {\em Proc. London Math. Soc. (3)}, 25:75--95, 1972.

\bibitem{blanc-thesis}
J.~Blanc.
\newblock Finite abelian subgroups of the {C}remona group of the plane, 2006.
\newblock Ph.D. Thesis, Universit\'e de Gen\`eve, {\tt arXiv:math/0610368}.

\bibitem{blancsubgroups}
J.~Blanc.
\newblock Elements and cyclic subgroups of finite order of the {C}remona group.
\newblock {\em Comment. Math. Helv.}, 86(2):469--497, 2011.

\bibitem{blanc2018finite}
J.~Blanc, I.~Cheltsov, A.~Duncan, and Yu. Prokhorov.
\newblock Finite quasisimple groups acting on rationally connected threefolds,
  2018.
\newblock {\tt arxiv:1809.09226}.

\bibitem{BP}
F.~Bogomolov and Yu. Prokhorov.
\newblock On stable conjugacy of finite subgroups of the plane {C}remona group,
  {I}.
\newblock {\em Cent. Eur. J. Math.}, 11(12):2099--2105, 2013.

\bibitem{bright}
M.~J. Bright, N.~Bruin, E.~V. Flynn, and A.~Logan.
\newblock The {B}rauer-{M}anin obstruction and {$\Sh[2]$}.
\newblock {\em LMS J. Comput. Math.}, 10:354--377, 2007.

\bibitem{ct-kanevsky}
J.-L. Colliot-Th\'{e}l\`ene, D.~Kanevsky, and J.-J. Sansuc.
\newblock Arithm\'{e}tique des surfaces cubiques diagonales.
\newblock In {\em Diophantine approximation and transcendence theory ({B}onn,
  1985)}, volume 1290 of {\em Lecture Notes in Math.}, pages 1--108. Springer,
  Berlin, 1987.

\bibitem{CTSSD}
J.-L. Colliot-Th\'{e}l\`ene, J.-J. Sansuc, and P.~Swinnerton-Dyer.
\newblock Intersections of two quadrics and {C}h\^{a}telet surfaces. {I}.
\newblock {\em J. Reine Angew. Math.}, 373:37--107, 1987.

\bibitem{corn}
P.~Corn.
\newblock The {B}rauer-{M}anin obstruction on del {P}ezzo surfaces of degree 2.
\newblock {\em Proc. Lond. Math. Soc. (3)}, 95(3):735--777, 2007.

\bibitem{deFer}
T.~de~Fernex.
\newblock On planar {C}remona maps of prime order.
\newblock {\em Nagoya Math. J.}, 174:1--28, 2004.

\bibitem{DI}
I.~V. Dolgachev and V.~A. Iskovskikh.
\newblock Finite subgroups of the plane {C}remona group.
\newblock In {\em Algebra, arithmetic, and geometry: in honor of {Y}u. {I}.
  {M}anin. {V}ol. {I}}, volume 269 of {\em Progr. Math.}, pages 443--548.
  Birkh\"{a}user Boston, Boston, MA, 2009.

\bibitem{FW}
A.~Fr\"{o}hlich and C.~T.~C. Wall.
\newblock Equivariant {B}rauer groups in algebraic number theory.
\newblock In {\em Colloque de {T}h\'{e}orie des {N}ombres ({U}niv. de
  {B}ordeaux, {B}ordeaux, 1969)}, pages 91--96. Bull. Soc. Math. France,
  M\'{e}m No. 25. 1971.

\bibitem{HKTsmall}
B.~Hassett, A.~Kresch, and Yu. Tschinkel.
\newblock Symbols and equivariant birational geometry in small dimensions.
\newblock In {\em Rationality of Varieties}, volume 342 of {\em Progr. Math.},
  pages 201--236. Birkh\"auser, Cham, 2021.

\bibitem{HT-quad}
B.~Hassett and Yu. Tschinkel.
\newblock Equivariant geometry of odd-dimensional complete intersections of two
  quadrics, 2022.
\newblock {\tt arXiv:2107.14319}.

\bibitem{KT-dp2}
A.~Kresch and Yu. Tschinkel.
\newblock On the arithmetic of del {P}ezzo surfaces of degree 2.
\newblock {\em Proc. London Math. Soc. (3)}, 89(3):545--569, 2004.

\bibitem{KT-eff}
A.~Kresch and Yu. Tschinkel.
\newblock Effectivity of {B}rauer-{M}anin obstructions.
\newblock {\em Adv. Math.}, 218(1):1--27, 2008.

\bibitem{bssurf}
A.~Kresch and Yu. Tschinkel.
\newblock Models of {B}rauer-{S}everi surface bundles.
\newblock {\em Mosc. Math. J.}, 19(3):549--595, 2019.

\bibitem{BnG}
A.~Kresch and Yu. Tschinkel.
\newblock Equivariant birational types and {B}urnside volume, 2020.
\newblock {\tt arXiv:2007.12538}, to appear in {\em Annali della Scuola Normale
  Superiore di Pisa}.

\bibitem{KT-orbi}
A.~Kresch and Yu. Tschinkel.
\newblock Birational types of algebraic orbifolds.
\newblock {\em Mat. Sb.}, 212(3):54--67, 2021.

\bibitem{KT-vector}
A.~Kresch and Yu. Tschinkel.
\newblock Equivariant {B}urnside groups and representation theory, 2021.
\newblock {\tt arXiv:2108.00518}.

\bibitem{kst}
B.~\`E. Kunyavski\u{\i}, A.~N. Skorobogatov, and M.~A. Tsfasman.
\newblock Del {P}ezzo surfaces of degree four.
\newblock {\em M\'{e}m. Soc. Math. France (N.S.)}, (37):113, 1989.

\bibitem{manin}
Yu.~I. Manin.
\newblock Rational surfaces over perfect fields.
\newblock {\em Inst. Hautes \'{E}tudes Sci. Publ. Math.}, (30):55--113, 1966.

\bibitem{manin2}
Yu.~I. Manin.
\newblock Rational surfaces over perfect fields. {II}.
\newblock {\em Mat. Sb. (N.S.)}, 72 (114):161--192, 1967.

\bibitem{pro-2}
Yu. Prokhorov.
\newblock On stable conjugacy of finite subgroups of the plane {C}remona group,
  {II}.
\newblock {\em Michigan Math. J.}, 64(2):293--318, 2015.

\bibitem{sarikyan}
A.~Sarikyan.
\newblock On linearization problems in the plane {C}remona group, 2020.
\newblock {\tt arXiv:2009.05761}.

\bibitem{shinder}
E.~Shinder.
\newblock The {B}ogomolov-{P}rokhorov invariant of surfaces as equivariant
  cohomology.
\newblock {\em Bull. Korean Math. Soc.}, 54(5):1725--1741, 2017.

\bibitem{swd}
P.~Swinnerton-Dyer.
\newblock The {B}rauer group of cubic surfaces.
\newblock {\em Math. Proc. Cambridge Philos. Soc.}, 113(3):449--460, 1993.

\bibitem{tony}
A.~V\'{a}rilly-Alvarado.
\newblock Weak approximation on del {P}ezzo surfaces of degree 1.
\newblock {\em Adv. Math.}, 219(6):2123--2145, 2008.

\end{thebibliography}

\end{document}